\newlist{primenumerate}{enumerate}{1}
\setlist[primenumerate,1]{label={\arabic*$'$}}
\DeclareSymbolFont{cyrletters}{OT2}{wncyr}{m}{n}
\DeclareMathSymbol{\Sha}{\mathalpha}{cyrletters}{"58}
\newtheorem{theorem}{Theorem}[section]
\newtheorem{lemma}[theorem]{Lemma}
\newtheorem{proposition}[theorem]{Proposition}
\numberwithin{equation}{section}
\newtheorem{lthm}{Theorem}
\newtheorem{lpro}{Proposition}
\theoremstyle{remark}
\newtheorem{remark}[theorem]{Remark}
\newtheorem{example}[theorem]{Example}
\newcommand{\rk}{\operatorname{rk}}
\newcommand{\EC}{\mathsf{E}}
\newcommand{\Z}{\mathbb{Z}}
\newcommand{\Q}{\mathbb{Q}}
\theoremstyle{plain} 
\newtheorem*{intr@thm}{\intr@thmname}
\newtheorem*{c@njecture}{\conjn@name}
\newcommand{\myl@bel}[2]{
 \protected@write \@auxout {}{\string \newlabel {#1}{{#2}{\thepage}{#2}{#1}{}} }
 \hypertarget{#1}{}
 } 
\newcommand{\mylabel}[2]{#2\def\@currentlabel{#2}\label{#1}}
\title[]{Hilbert's $10^{\text{th}}$ Problem via Mordell curves}
\author[S.~Jha]{Somnath Jha}
\address[Jha]{Department of Mathematics and Statistics, IIT Kanpur, India}
\email{jhasom@iitk.ac.in}
\author[D.~Kundu]{Debanjana Kundu}
\address[Kundu]{Department of Mathematical and Statistical Sciences\\ UTRGV \\ 1201 W University Dr.\\ Edinburg, TX 78539\\ USA}
\email{dkundu@math.toronto.edu}
\author[D.~Majumdar]{Dipramit Majumdar}
\address[Majumdar]{Department of Mathematics, IIT Madras, India}
\email{dipramit@iitm.ac.in}
\keywords{Hilbert's $10^{\text{th}}$ Problem, cube-sum problem, elliptic curves, cubic twists}
\subjclass[2020]{Primary: 11G05, Secondary: 11U05, 11D25}
\begin{document}
\begin{abstract}
We show that for $5/6$-th  of all primes $p$, Hilbert’s $10^\text{th}$ problem is unsolvable for the ring of integers of $\Q(\zeta_3, \sqrt[3]{p})$. 
We also show that there is an infinite set $S$ of square-free integers  such that Hilbert’s $10^\text{th}$ problem is unsolvable over the ring of integers of $\Q(\zeta_3, \sqrt{D}, \sqrt[3]{p})$ for every $D \in S$ and for every prime $p \equiv 2, 5 \pmod 9$.
We use the CM elliptic curves $y^2=x^3-432 D^2$ associated to the cube-sum problem, with $D$ varying in suitable congruence class, in our proof.
\end{abstract}

\maketitle

\section*{Introduction}

In 1900, D.~Hilbert posed the following question (which was the tenth in his list of twenty-three questions): does there exist an algorithm (Turing machine) that takes as input polynomial equations over $\Z$ and decides whether they have integer solutions.
Building on the work of M.~Davis, H.~Putnam, and J.~Robinson in \cite{davis1961diophantine, davis1961decision, robinson1969unsolvable}, Y.~Matiyasevich proved that computably enumerable sets over $\Z$ are exactly the same as the Diophantine sets over $\Z$; see \cite{Mat70}.
In other words, he proved that Hilbert's $10^{\text{th}}$ problem is unsolvable.

Soon after in 1978, J.~Denef and L.~Lipshitz \cite{denef1978diophantine} asked whether there exists an algorithm (Turing machine) that takes as input polynomial equations over $\mathcal{O}_F$ (the ring of integers of a number field $F$) and decides whether they have integer solutions.
A straight-forward argument shows that if $\Z$ is Diophantine in $\mathcal{O}_F$, then the analogue of Hilbert’s $10^{\text{th}}$ problem for $\mathcal{O}_F$ has a negative solution.

At the start of the 21st century, it was first observed by B.~Poonen that there is a connection between rank stabilization  of elliptic curve in extensions of number fields and Hilbert's $10^{\text{th}}$ problem, \cite{poonen2002using}.
The result was further strengthened by A.~Shlapentokh (which is recorded in Theorem~\ref{Shlapentokh}) and has been one of the key tools in proving new cases of the conjecture of Denef--Lipshitz.
For several classes of number fields,  Denef--Lipshitz conjecture has been established. For example, (i) if a number field is totally real, (ii) if it a quadratic extension of a totally real number field, (iii) if the number field is abelian or (iv) if it has precisely one (pair of) complex places.
There are also important related works of B.~Mazur--K.~Rubin \cite{mazur2010ranks}.
For degree $6$ number fields of the form $\Q(\sqrt{-q}, \sqrt[3]{p})$ as $p$ and $q$ vary over  certain explicit set of primes of positive density,  Denef--Lipshitz conjecture was established in \cite{garcia2020towards, kundu2024studying}.
On the other hand, \cite{sw} proves that for a positive proportion of integers $a$, Hilbert's $10^{\text{th}}$ problem is unsolvable for $\mathcal{O}_F$ where $F$ is of the form $\Q(\sqrt[6]{a})$.

We consider Hilbert's $10^{\text{th}}$ problem for the ring of integers of certain $S_3$ extensions and degree 12 extensions of $\Q$. 
The Kummer extension $\Q(\zeta_3, \sqrt[3]{m}), m \in \Z$ is the first layer of the so called `false Tate curve' extension for $p=3$ and is widely studied in (non-commutative) Iwasawa theory.
Using Mordell curves $\EC_a: y^2 = x^3 + a, a \in \Z$ and its cubic twists, we apply results on the cube-sum problem to prove new cases of the Hilbert's $10^{\text{th}}$ problem. 
In particular, we prove the following results
\begin{lthm}
For $\frac{5}{6}$-th of all primes $p$, Hilbert's $10^{\text{th}}$ problem has a negative solution over the ring of integers of $\Q(\zeta_3, \sqrt[3]{p})$.
\end{lthm}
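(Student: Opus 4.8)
The plan is to combine Theorem~\ref{Shlapentokh} with the arithmetic of the CM curves $E_D \colon y^2 = x^3 - 432 D^2$ attached to the cube-sum problem (for cube-free $D \neq 1,2$ one has $\rk E_D(\Q) > 0$ if and only if $D$ is a sum of two rational cubes). Since $\Q(\zeta_3) = \Q(\sqrt{-3})$ is imaginary quadratic, $\Z$ is already known to be Diophantine in $\mathcal{O}_{\Q(\zeta_3)}$; so, by Theorem~\ref{Shlapentokh} applied to the cyclic cubic extension $\Q(\zeta_3,\sqrt[3]{p})/\Q(\zeta_3)$ together with the transitivity of ``being Diophantine'', it suffices to produce, for each prime $p$ in a set of density $5/6$, an elliptic curve $A/\Q(\zeta_3)$ with
\[
1 \;\le\; \rk A\!\left(\Q(\zeta_3)\right) \;=\; \rk A\!\left(\Q(\zeta_3,\sqrt[3]{p})\right).
\]
I would take $A = E_D$ for a suitable cube-free $D$ depending on $p$.

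Write $L = \Q(\zeta_3)$ and $K = L(\sqrt[3]{p})$. The first step is to turn this equality of ranks into a statement over $\Q$. Because $j(E_D) = 0$, the group $\mu_3 \subset \mathrm{Aut}(E_D)$ is defined over $L$, so the Weil restriction decomposes, up to isogeny over $L$, as $\mathrm{Res}_{K/L}\big((E_D)_K\big) \sim E_D \times E_{Dp} \times E_{Dp^2}$, the two nontrivial factors being the cubic twists of $E_D$ with $Dp, Dp^2$ reduced modulo cubes (so $E_n$ depends only on $n \bmod (\Q^\times)^3$, and each cubic twist becomes isomorphic to $E_D$ over $K$); taking $L$-points gives $\rk E_D(K) = \rk E_D(L) + \rk E_{Dp}(L) + \rk E_{Dp^2}(L)$. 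Next, a direct computation identifies the quadratic twist $E_n^{(-3)}$ of $E_n$ by $-3$ with the curve $y^2 = x^3 + 16 n^2$, which is $3$-isogenous to $E_n$ over $\Q$; hence $\rk E_n(L) = \rk E_n(\Q) + \rk E_n^{(-3)}(\Q) = 2\,\rk E_n(\Q)$ for every $n$. Thus the required positivity and equality both hold as soon as
\[
\rk E_D(\Q) \ge 1 \qquad\text{and}\qquad \rk E_{Dp}(\Q) = \rk E_{Dp^2}(\Q) = 0,
\]
i.e. $D$ is a sum of two rational cubes while $Dp$ and $Dp^2$ (reduced mod cubes) are not.

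It remains to choose, for each $p$ in a density-$5/6$ set of primes, a cube-free $D$ with these properties, and here the resolution of the cube-sum problem for $x^3+y^3=p$ and $x^3+y^3=p^2$ in the five classes $p \not\equiv 1 \pmod 9$ enters: $p$ is a sum of two rational cubes iff $p \equiv 4,7,8 \pmod 9$ (the proved cases of Sylvester's conjecture), and $p^2$ is iff $p \equiv 2,5 \pmod 9$. The mechanism is that reducing $Dp$ or $Dp^2$ modulo cubes can turn one of the two twists into $E_1$ (the curve of $x^3+y^3=1$), which has rank $0$ for free. For $p \equiv 4,7,8 \pmod 9$ I would take $D = p$: then $\rk E_p(\Q) \ge 1$, while the twists $E_{Dp} = E_{p^2}$ and $E_{Dp^2} = E_{p^3} = E_1$ both have rank $0$ (the first since $p^2$ is not a cube sum in these classes). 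For $p \equiv 2,5 \pmod 9$ I would take $D = p^2$: then $\rk E_{p^2}(\Q) \ge 1$ (since $p$ is not a cube sum), while the twists $E_{Dp} = E_{p^3} = E_1$ and $E_{Dp^2} = E_{p^4} = E_p$ both have rank $0$. The one residue class left untreated is $p \equiv 1 \pmod 9$, precisely the open case of Sylvester's conjecture; the five treated classes have Dirichlet density $5/6$ among the primes, which gives the theorem after discarding the finitely many $p \mid 6$.

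The main obstacle is exactly this arithmetic input: knowing precisely when $x^3+y^3 = p$ and $x^3+y^3 = p^2$ are solvable for $p \not\equiv 1 \pmod 9$. The solvable cases — the proved instances of Sylvester's conjecture, and (if one prefers to route through $D \in \{2p, 2p^2\}$, so that $E_{2p^3} = E_2$ kills a twist) Satgé's Heegner-point constructions — supply the positivity $\rk E_D(\Q) \ge 1$; the unsolvable cases, which reduce via classical $3$-descent for primes inert in $\Q(\zeta_3)$ and, otherwise, via Rubin's theorem for CM elliptic curves together with non-vanishing of central $L$-values, supply the vanishing of the surviving cubic twist. Arranging that these two requirements can be met simultaneously by a single cube-free $D$ in each class is the delicate bookkeeping step, and it is what pins the conclusion to the $5/6$ of primes with $p \not\equiv 1 \pmod 9$.
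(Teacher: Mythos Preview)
Your reduction via Theorem~\ref{Shlapentokh} and the rank formula $\rk E_D(K) = 2\bigl(\rk E_D(\Q) + \rk E_{Dp}(\Q) + \rk E_{Dp^2}(\Q)\bigr)$ matches Lemma~\ref{lem1}, and the idea of taking $D \in \{p, p^2\}$ so that one cubic twist collapses to $E_1$ is attractive. The gap is that the arithmetic dichotomy you invoke is false. For $p \equiv 2,5 \pmod 9$ the classical descent of Sylvester--P\'epin (Theorem~\ref{Thm: JMS}(a)) shows that \emph{neither} $p$ nor $p^2$ is a cube sum, so your choice $D = p^2$ gives $\rk E_{p^2}(\Q) = 0$ rather than $\ge 1$; the parenthetical ``since $p$ is not a cube sum'' does not supply the missing positivity. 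For $p \equiv 4,7 \pmod 9$ the assertion that $p$ is a cube sum is the \emph{open} direction of Sylvester's conjecture: Dasgupta--Voight (Theorem~\ref{Thm: JMS}(b)(i)) require the extra hypothesis $3 \notin \mathbb{F}_p^{3}$, which fails for a positive proportion of such $p$. For $p \equiv 8 \pmod 9$ both $E_p$ and $E_{p^2}$ have $3$-Selmer rank~$1$, so one expects (and often proves) that $p^2$ \emph{is} a cube sum as well, contrary to the vanishing you need for the surviving twist.

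The paper avoids all of this by taking $D$ coprime to $p$. For $p \equiv 2,5 \pmod 9$ it uses $D = 9$: the base curve $E_9$ has rank~$1$ over $\Q$, while $9p$ and $9p^2$ are not cube sums by Theorem~\ref{Thm: JMS}(a). For $p \equiv 4,7 \pmod 9$ it manufactures, via Theorem~\ref{Thm: JMS}(c), an auxiliary prime $\ell \equiv 8 \pmod 9$ with $\ell \notin \mathbb{F}_p^{3}$ that is itself a cube sum, and then $\ell p,\ell p^2$ are not cube sums by Theorem~\ref{Thm: JMS}(b)(ii), so $D = \ell$ works. For $p \equiv 8 \pmod 9$ no single $D$ handles all such $p$, and Lemma~\ref{Lemma 4.2} instead runs a density sieve over an infinite family of primes $\ell \equiv 4,7 \pmod 9$ with $3 \notin \mathbb{F}_\ell^{3}$ to capture $100\%$ of them. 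The class $p \equiv 1 \pmod 9$ is indeed the missing $1/6$, as you guessed, but reaching the other five unconditionally requires these auxiliary choices of $D$ rather than $D = p$ or $p^2$.
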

 
\begin{lpro}
Hilbert's $10^{\text{th}}$ problem is unsolvable over the ring of integers of $\Q(\zeta_3, \sqrt{D}, \sqrt[3]{p})$, where $D$  varies over an infinite set $S$ consisting of square-free integers and $p$ is a prime such that $p \equiv 2,5 \pmod 9$.
Further, $\#(S \cap [-X,X]) \gg X^{1 - \epsilon}$ where $X \gg 0$ and for any $\epsilon >0$.
\end{lpro}

We emphasize that this result complements the results proven in \cite{kundu2024studying} and indeed provides (infinitely many) new examples of number fields where Hilbert's $10^{\text{th}}$ problem is unsolvable.

We are interested in Hilbert's $10^{\text{th}}$ problem for the ring of integers of $\Q(\zeta_3, \sqrt[3]{m}), m\in \Z$ and it raised the following natural question which is of interdependent interest:
given a Mordell elliptic curve $\EC_a: y^2=x^3+a, a \in \Z$ does there exist a cube-free integer $D$ co-prime to $a$ such that precisely one of the elliptic curves $\EC_a, \EC_{aD^2}, \EC_{aD^4}$ has positive Mordell--Weil rank over $\Q$?
We did an extensive computation on this on SAGE/MAGMA which are mentioned in Section~\ref{Sec: SAGE computations}.
This led us to apply results from the cube-sum problem.

As we were completing our preprint, C.~Pagano--P.~Koymans announced an unconditional proof of the unsolvability of Hilbert's $10^{\text{th}}$ problem \cite{KP24} over the ring of integers of any number field.
Their result is accomplished by constructing elliptic curves $\EC$ and studying their quadratic twists to guarantee that there is no rank growth of $\EC$ in certain quadratic extensions. They use $2$-descent argument and tools from additive combinatorics. 
On the other hand, our approach involves studying cubic twists instead of quadratic twists of elliptic curves.
In fact, we specifically use  Mordell curves. 
We use results from the cube-sum problem to study the cubic twists of the Mordell curves. Thus our method is different.
Even more recently, L.~Alpöge--M.~Bhargava--W.~Ho--A.~Shnidman announced another proof \cite{ABHS}.

\subsubsection*{Organization}
Including this introduction, there are three sections in this short note.
Section~\ref{Sec: prelims} is preliminary in nature, where we record crucial results that will be used in the main argument.
In Section~\ref{Sec: SAGE computations}, we discuss the main motivation behind this note and also raise questions which we have not been able to answer completely.
In Section~\ref{Sec: main result}, we state and prove the main result of our note.

\section*{Acknowledgements}
We thank Alvaro Lozano-Robledo, Sudhanshu Shekhar, and Ari Shnidman for helpful discussions.
We thank Antonio Lei for his interest in our paper.
We thank the referee for their timely reading of the paper and for helpful comments; in particular, we thank them for suggesting using MAGMA to complete our list of examples.
SJ is supported by ANRF grant CRG/2022/005923.
DK is supported by an AMS-Simons Early Career Travel Grant.

\section{Preliminaries}
\label{Sec: prelims}

In this section, we record some results which will be used crucially in our argument(s).

\subsection{Cube-Sum Problem}

The classical Diophantine cube-sum problem  asks the question: which integers $D$ can be expressed as a sum of two rational cubes?
An integer $D$ is called a cube-sum (resp. not a cube-sum), if $x^3+y^3=D$ has (resp. does not have) a rational solution.
The elliptic curve $X^3+Y^3=DZ^3$ can be expressed in the  Weierstrass equation as $y^2=x^3-432D^2$ and for a cube-free integer $D>2$ it is known that torsion subgroup of ${\EC_{-432D^2}(\Q)}$ vanishes.
Thus, a cube-free integer $D > 2$ satisfies $D=a^3+b^3$ for $a, b \in \Q $ if and only if the Mordell-Weil group $ E_{-432D^2}(\Q)  $ has a positive rank.
The cube-sum problem has been studied in great detail with important contributions by J.~Sylvester, F.~Lucas, T.~P{\'e}pin, E.~Selmer, P.~Satg{\'e}, D.~Lieman, F.~R.~Villegas--D.~Zagier etc.
Some recent works include \cite{DV, cai2017cube, hu2019explicit, absbs}.
We refer to  \cite{DV,absbs} for a literature review.
We record the special cases below which will be used in the article.

\begin{theorem}
\label{Thm: JMS}
\leavevmode
\begin{enumerate}
\item[\textup{(}a\textup{)}]
Suppose that $p \equiv 2, 5 \pmod{9}$ is an odd prime. By \cite[Section 2]{Sylvester} the integers $p, p^2, 9p,$ and $9p^2$ are not cube-sum.
In other words for all primes $p \equiv 2,5 \pmod 9$, we have
\[
\rk_{\Z}\EC_{-432p^2}(\Q) = \rk_{\Z}\EC_{-432p^4}(\Q) = \rk_{\Z}\EC_{-432(9p)^2}(\Q) = \rk_{\Z}\EC_{-432(9p^2)^2}(\Q) = 0.
\]

\item[\textup{(}b\textup{)}]
Suppose that $p \equiv 4, 7 \pmod{9}$ is an odd prime.
\begin{enumerate}
\item[\textup{(}i\textup{)}] By \cite{DV}, if $3 \not\in \mathbb{F}_p^3$, then $p$ is a cube-sum.
In other words $\rk_{\Z}\EC_{-432p^2}(\Q) >0 $.

\item[\textup{(}ii\textup{)}] 
Suppose that $\ell \equiv 8\pmod{9}$ is a prime such that $\ell \notin \mathbb{F}^3_p$. By \cite{majumdar2023cube} the integers $\ell p, \ell p^2$, and $p \ell^2$ are not cube-sum.
Equivalently,
\[
\rk_{\Z}\EC_{-432(\ell p)^2}(\Q) = \rk_{\Z}\EC_{-432(\ell p^2)^2}(\Q) = \rk_{\Z}\EC_{-432(p \ell^2)^2}(\Q)= 0.
\]


\end{enumerate}

\item[\textup{(}c\textup{)}]
Let $d$ be a positive integer and $a \equiv 8 \pmod 9$ be an integer co-prime to $d$. By \cite[Theorem B]{JMS2}, there are infinitely many primes $p \equiv a \pmod{9d}$ such that $p$ is a cube-sum. 

\end{enumerate}    
\end{theorem}

\subsection{Hilbert's \texorpdfstring{$10^{\text{th}}$}{} Problem and Elliptic Curves}

\begin{theorem}[A.~Shlapentokh {\cite{shlapentokh2008elliptic}}]
\label{Shlapentokh}
Let $L/F$ be an extension of number fields.
Suppose that there is an elliptic curve $\EC$ defined over $F$ such that 
\[
\rk_{\Z} \EC(L) = \rk_{\Z} \EC(F) >0
\]
Then $\mathcal{O}_F$ is Diophantine in $\mathcal{O}_L$.

If further, it is known that $\Z$ is Diophantine in $\mathcal{O}_F$ (i.e., the analogue of Hilbert’s $10^{\text{th}}$ problem for $\mathcal{O}_F$ has a negative solution) then (by transitivity) $\Z$ is Diophantine in $\mathcal{O}_L$ and the analogue of Hilbert’s $10^{\text{th}}$ problem for $\mathcal{O}_L$ has a negative solution.    
\end{theorem}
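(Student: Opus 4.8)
The plan is to prove the two assertions separately: first the Diophantine definability of $\mathcal{O}_F$ in $\mathcal{O}_L$ extracted from the rank hypothesis, and then the transitivity statement, which is formal. For the first and main assertion, I would begin by reformulating the rank hypothesis algebraically. Since $\EC$ is defined over $F$ we have $\EC(F) \subseteq \EC(L)$, and because tensoring with $\Q$ is exact, the equality $\rk_\Z \EC(L) = \rk_\Z \EC(F) =: r > 0$ forces the inclusion $\EC(F)\otimes_\Z \Q \hookrightarrow \EC(L)\otimes_\Z\Q$ to be an isomorphism of $r$-dimensional $\Q$-vector spaces. Consequently the finitely generated abelian group $\EC(L)/\EC(F)$ is torsion, hence finite; write $m := [\EC(L):\EC(F)]$. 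By Lagrange's theorem $m$ annihilates the quotient, so $[m]\EC(L) \subseteq \EC(F)$. Since $\EC$ and the multiplication-by-$m$ isogeny are defined over $F$, this is the crucial confinement: for every $Q \in \EC(L)$ the coordinates of $[m]Q$ lie in $F$.

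Next I would assemble the Diophantine definition. Fix $P \in \EC(F)$ of infinite order, available since $r>0$. For a point $Q$ with coordinates in $\mathcal{O}_L$, the $x$-coordinate $x([m]Q)$ is a fixed rational function, with coefficients in $F$, of $x(Q)$ and $y(Q)$; introducing these as existential variables constrained by the Weierstrass equation of $\EC$, and clearing denominators, exhibits $\{\, x([m]Q) : Q \in \EC(L) \,\}$ as a Diophantine subset of $\mathcal{O}_L$. By the confinement of the previous paragraph this set lies in $F$, hence in $\mathcal{O}_F$ once integrality of $[m]Q$ is imposed. The remaining task is to ensure the definition captures \emph{all} of $\mathcal{O}_F$: here one exploits the group law on $\EC$ (realizing addition of multiples of $P$ by rational functions) together with the elliptic divisibility sequence attached to $P$, whose denominators $d_n$ satisfy $n \mid n' \Rightarrow d_n \mid d_{n'}$ and thereby make the multiplier $n$ detectable Diophantinely, and the closure of Diophantine sets under $+$ and $\times$ in $\mathcal{O}_L$, so as to build from the coordinate values a system whose projection is exactly $\mathcal{O}_F$. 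The rank equality is precisely what prevents these coordinate values from ever leaving $F$.

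The transitivity assertion is then formal and constitutes the easy part. Given a Diophantine definition of $\Z$ in $\mathcal{O}_F$, say $\Z = \{\, t \in \mathcal{O}_F : \exists\, \bar u \in \mathcal{O}_F^k,\ G(t,\bar u) = 0 \,\}$ with $G$ over $\mathcal{O}_F \subseteq \mathcal{O}_L$, and the Diophantine definition of $\mathcal{O}_F$ in $\mathcal{O}_L$ just constructed, one substitutes the latter into the former. Since Diophantine sets over $\mathcal{O}_L$ are closed under finite intersection and projection, the composite cuts out $\Z$ Diophantinely in $\mathcal{O}_L$. By the Davis--Putnam--Robinson--Matiyasevich theorem the Diophantine and computably enumerable subsets of $\Z$ coincide, so no algorithm decides solvability of polynomial systems over $\mathcal{O}_L$; that is, Hilbert's $10^{\text{th}}$ problem has a negative solution for $\mathcal{O}_L$.

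The main obstacle I anticipate is the middle step: pinning down \emph{exactly} $\mathcal{O}_F$. The confinement $[m]\EC(L)\subseteq \EC(F)$ cheaply produces a Diophantine subset of $F$, but two delicate points remain. First, the integrality bookkeeping needed to land in the ring of integers rather than in $F$ with denominators, handled by localizing and clearing at the relevant finite set of primes. Second, and more seriously, one must show that the sparse set of coordinate values, after closure under the ring operations, genuinely \emph{fills} all of $\mathcal{O}_F$ without overshooting; this is where the divisibility-sequence structure and a careful choice of the generating point $P$ are indispensable, and where the bulk of the technical work of \cite{shlapentokh2008elliptic} lies.
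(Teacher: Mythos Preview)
The paper does not prove this theorem. It appears in Section~\ref{Sec: prelims} (Preliminaries) as a quoted result of Shlapentokh, cited from \cite{shlapentokh2008elliptic}, and is used as a black box in the remainder of the article. There is consequently no proof in the paper against which to compare your proposal.

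For what it is worth, your sketch is a reasonable outline of the strategy behind Shlapentokh's argument: the rank hypothesis forces $\EC(L)/\EC(F)$ to be finite, so some multiple $[m]\EC(L)$ lands in $\EC(F)$, and one then leverages elliptic divisibility sequences attached to a point of infinite order to build the Diophantine definition; the transitivity clause is indeed formal. But, as you yourself flag, the substantive step---showing that the construction recovers \emph{all} of $\mathcal{O}_F$ with correct integrality bookkeeping---is not carried out in your proposal and is precisely where the work of \cite{shlapentokh2008elliptic} lies. So your write-up is an outline pointing to the source, much as the paper's citation does, rather than an independent proof.
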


A useful corollary of Shlapentokh's theorem was proved in \cite[Proposition~3.3]{garcia2020towards}, which we record below.

\begin{theorem}
\label{GFP prop 3.3}
Let $F/\Q$ and $K/\Q$ be extensions of number fields with $K/\Q$ quadratic.
Let $L$ be the compositum of $F$ and $K$ over $\Q$.
Suppose that there is an elliptic curve $\EC/\Q$ satisfying the conditions:
\begin{enumerate}
    \item[\textup{(}i\textup{)}] $\rk_{\Z}\EC(F) = 0$ \textit{ and }
    \item[\textup{(}ii\textup{)}] $\rk_{\Z}\EC(K) > 0$.
\end{enumerate}
Then $\mathcal{O}_L/\mathcal{O}_F$ is integrally Diophantine.  

If further, it is known that the analogue of Hilbert’s $10^{\text{th}}$ problem for $\mathcal{O}_F$ has a negative solution, then the analogue of Hilbert’s $10^{\text{th}}$ problem for $\mathcal{O}_L$ has a negative solution.    

\end{theorem}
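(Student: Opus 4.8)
The plan is to reduce this statement to Shlapentokh's theorem (Theorem~\ref{Shlapentokh}) applied to the extension $L/F$: it suffices to produce an elliptic curve defined over $F$ whose Mordell--Weil rank over $F$ is positive and does not grow on base change to $L$. The natural candidate is the quadratic twist of $\EC$ by the character cutting out $K/\Q$. Concretely, I would write $K=\Q(\sqrt{d})$ with $d\in\Z$ squarefree, so that $L=F(\sqrt{d})$. If $\sqrt{d}\in F$ then $L=F$ and the assertion is vacuous, so assume $L/F$ is a genuine quadratic extension, with $\Gal(L/F)=\{1,\sigma\}$. The basic tool is the classical rank formula for quadratic extensions: for an elliptic curve $A$ over a number field $M$ and a quadratic extension $M(\sqrt{d})/M$, decomposing $A\big(M(\sqrt{d})\big)\otimes_\Z\Q$ into the $(\pm1)$-eigenspaces of the nontrivial automorphism identifies the $(+1)$-part with $A(M)\otimes_\Z\Q$ and the $(-1)$-part with $A^{(d)}(M)\otimes_\Z\Q$, where $A^{(d)}$ is the quadratic twist of $A$ by $d$; hence
\[
\rk_\Z A\big(M(\sqrt{d})\big)=\rk_\Z A(M)+\rk_\Z A^{(d)}(M).
\]

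I would first apply this with $A=\EC$ and $M=\Q$, using $\Q\subseteq K\subseteq L$, to get $\rk_\Z\EC(K)=\rk_\Z\EC(\Q)+\rk_\Z\EC^{(d)}(\Q)$. Since $\Q\subseteq F$, hypothesis~(i) forces $\rk_\Z\EC(\Q)\le\rk_\Z\EC(F)=0$, so hypothesis~(ii) yields $\rk_\Z\EC^{(d)}(\Q)=\rk_\Z\EC(K)>0$. In particular the twist $\EC^{(d)}$, now regarded over $F$, already satisfies $\rk_\Z\EC^{(d)}(F)\ge\rk_\Z\EC^{(d)}(\Q)>0$.

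Next I would apply the rank formula with $A=\EC^{(d)}$ and $M=F$, using that $\big(\EC^{(d)}\big)^{(d)}\cong\EC$ over $F$ (twisting twice by the same class is trivial):
\[
\rk_\Z\EC^{(d)}(L)=\rk_\Z\EC^{(d)}(F)+\rk_\Z\EC(F)=\rk_\Z\EC^{(d)}(F)>0,
\]
where the last equality uses hypothesis~(i) once more. Thus $\EC^{(d)}/F$ has equal and positive rank over $F$ and over $L$, so Theorem~\ref{Shlapentokh} applies to the extension $L/F$ and shows that $\mathcal{O}_F$ is Diophantine in $\mathcal{O}_L$, i.e.\ $\mathcal{O}_L/\mathcal{O}_F$ is integrally Diophantine. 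The final assertion is then immediate by transitivity of ``is Diophantine in'': if $\Z$ is Diophantine in $\mathcal{O}_F$ and $\mathcal{O}_F$ is Diophantine in $\mathcal{O}_L$, then $\Z$ is Diophantine in $\mathcal{O}_L$, so Hilbert's $10^{\text{th}}$ problem has a negative solution over $\mathcal{O}_L$.

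I do not anticipate a genuine obstacle: the argument is essentially a two-step application of the quadratic-twist rank formula followed by Theorem~\ref{Shlapentokh}. The only points needing a moment's care are the reduction from $K$ to $\Q$ --- observing that (i) forces $\rk_\Z\EC(\Q)=0$, so that the positive rank supplied by (ii) is carried entirely by $\EC^{(d)}$ --- and the identity $\big(\EC^{(d)}\big)^{(d)}\cong\EC$ over $F$, which is precisely what makes $\rk_\Z\EC^{(d)}$ stable in the quadratic extension $L/F$. One should also not forget to dispose of the degenerate case $\sqrt{d}\in F$ at the outset.
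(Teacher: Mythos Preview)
Your argument is correct. Note, however, that the paper does not actually prove this statement: it is merely recorded as a citation of \cite[Proposition~3.3]{garcia2020towards}, so there is no ``paper's own proof'' to compare against. That said, your approach --- passing to the quadratic twist $\EC^{(d)}$, using the rank decomposition $\rk_\Z A(M(\sqrt{d}))=\rk_\Z A(M)+\rk_\Z A^{(d)}(M)$ twice (once over $\Q$ to see $\rk_\Z\EC^{(d)}(\Q)>0$, once over $F$ to see no rank growth in $L/F$), and then invoking Theorem~\ref{Shlapentokh} --- is exactly the standard proof and is essentially the argument given in the cited reference.
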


\section{Some Computations}
\label{Sec: SAGE computations}

\subsection{Sufficient conditions}

Let $a$ be a non-zero integer and consider the elliptic curve
\[
\EC = \EC_a: y^2 = x^3 + a
\]
It is well known that $\EC_a$ is an elliptic curve with complex multiplication by $\Z[\zeta_3]$ and there is a degree $3$ rational isogeny $\phi_a:\EC_a \rightarrow \EC_{-27a}$. It follows that $\rk_{\Z}\EC(K) = 2\rk_{\Z}\EC(\Q)$, here $K$ denotes the number field $K=\Q(\zeta_3)$.
Fix a cube-free (rational) integer $D$ and define the elliptic curves
\begin{align*}
    \EC_1 = \EC_{aD^2} : y^2 & = x^3 + D^2 a\\
    \EC_2 = \EC_{aD^4} : y^2 & = x^3 + D^4 a.
\end{align*}
The curves $\EC_1$ and $\EC_2$ are the cubic twists of $\EC$, i.e., the three elliptic curves $\EC$, $\EC_1$, and $\EC_2$ are isomorphic over $F = \Q(\sqrt[3]{D})$.
In particular, the three elliptic curves are isomorphic over the Galois closure of $F$, in other words over the extension $L = F\cdot K = \Q(\zeta_3, \sqrt[3]{D})$.

Recall that
\begin{equation}\label{eq1}
\rk_{\Z}\EC(L) =\rk_{\Z}\EC_1(L) = \rk_{\Z}\EC_2(L)  = \rk_{\Z}\EC(K) + \rk_{\Z}\EC_1(K) + \rk_{\Z}\EC_2(K)
\end{equation}

Now, suppose that for a fixed cube-free integer $D$, there exists an elliptic curve
\[
\EC: y^2 = x^3 + a_D
\]
such that \textit{exactly one} of $\EC, \ \EC_1, \ \EC_2$ has positive Mordell--Weil rank $r$ over $\Q$ and the other two have Mordell--Weil rank 0.
Without loss of generality, suppose that $\rk_{\Z}\EC(\Q)=r>0$.
Then $\rk_{\Z}\EC(K) =2r$ and by the assumption in the previous paragraph, it follows that $\rk_{\Z}\EC_1(K) = \rk_{\Z}\EC_2(K) =0$.
This implies
\[
\rk_{\Z}\EC(L) = \rk_{\Z}\EC(K) = 2r + 0 + 0 =2r >0.
\]
Since Hilbert's $10^{\text{th}}$ Problem is unsolvable in $K=\Q(\zeta_3)$, using Shlapentokh's result (Theorem~\ref{Shlapentokh}) we deduce the following lemma:
\begin{lemma}\label{lem1}
If there exists  an elliptic curve $\EC: y^2 = x^3 + a$ such that upon twisting by a cube-free integer $D$ exactly one of $\EC/\Q, \ \EC_1/\Q, \ \EC_2/\Q$ has positive Mordell--Weil rank, then Hilbert's $10^{\text{th}}$ problem is unsolvable for $\mathcal{O}_L$ where $L=\Q(\zeta_3, \sqrt[3]{D})$.
\end{lemma}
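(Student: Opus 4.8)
The plan is to derive the lemma from Shlapentokh's no-rank-growth criterion (Theorem~\ref{Shlapentokh}) applied to the tower $L/K$ with $K=\Q(\zeta_3)$, taking the Mordell curve $\EC$ itself as the auxiliary elliptic curve. First I would note that the hypothesis and the desired conclusion are symmetric under permuting the three cubic twists $\EC,\EC_1,\EC_2$: all three are isomorphic over $L$, and the formula \eqref{eq1} is symmetric in them. Hence, after relabelling, we may assume that the curve of positive rank over $\Q$ is $\EC$ itself, and we set $r=\rk_{\Z}\EC(\Q)>0$, so that $\rk_{\Z}\EC_1(\Q)=\rk_{\Z}\EC_2(\Q)=0$.

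Next I would record the only input beyond the hypothesis. For any nonzero integer $b$, the Mordell curve $\EC_b:y^2=x^3+b$ has CM by $\Z[\zeta_3]$, carries the rational $3$-isogeny $\phi_b:\EC_b\to\EC_{-27b}$, and is a quadratic twist by $-3$ of $\EC_{-27b}$; since $K=\Q(\sqrt{-3})$, the standard decomposition of the Mordell--Weil rank over a quadratic field gives
\[
\rk_{\Z}\EC_b(K)=\rk_{\Z}\EC_b(\Q)+\rk_{\Z}\EC_{-27b}(\Q)=2\,\rk_{\Z}\EC_b(\Q),
\]
the last equality because $\phi_b$ forces $\rk_{\Z}\EC_b(\Q)=\rk_{\Z}\EC_{-27b}(\Q)$. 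This is precisely the reasoning already used in the preamble to the lemma for $\EC=\EC_a$; taking $b=a,\ aD^2,\ aD^4$ yields $\rk_{\Z}\EC(K)=2r$ and $\rk_{\Z}\EC_1(K)=\rk_{\Z}\EC_2(K)=0$.

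Substituting into \eqref{eq1},
\[
\rk_{\Z}\EC(L)=\rk_{\Z}\EC(K)+\rk_{\Z}\EC_1(K)+\rk_{\Z}\EC_2(K)=2r=\rk_{\Z}\EC(K)>0,
\]
so $\EC/K$ has positive Mordell--Weil rank which does not grow in $L$. Because $\Z$ is Diophantine in $\mathcal{O}_K=\Z[\zeta_3]$ --- Hilbert's $10^{\text{th}}$ problem being already settled negatively for rings of integers of abelian (in particular, imaginary quadratic) number fields --- Theorem~\ref{Shlapentokh} with $F=K$, the given $L$, and $\EC/K$ shows that $\Z$ is Diophantine in $\mathcal{O}_L$, which is the assertion.

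The only step carrying content beyond bookkeeping is the implication $\rk_{\Z}\EC_i(\Q)=0\Rightarrow\rk_{\Z}\EC_i(K)=0$ for the two rank-zero twists, i.e. the CM/$3$-isogeny identity displayed above; this, together with the hypothesis that \emph{exactly one} of the three $\Q$-ranks is positive, is exactly what makes the rank stable from $K$ to $L$. I therefore do not expect a genuine obstacle inside the proof of the lemma: the real difficulty is exhibiting, for a given $D$, a Mordell curve $\EC$ with the required rank pattern, and that is deferred to the next section via the cube-sum results of Theorem~\ref{Thm: JMS}.
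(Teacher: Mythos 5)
Your proposal is correct and follows essentially the same route as the paper: relabel so the positive-rank twist is $\EC$ itself, use the CM/$3$-isogeny identity $\rk_{\Z}\EC_b(K)=2\rk_{\Z}\EC_b(\Q)$ to transfer the rank pattern to $K=\Q(\zeta_3)$, conclude rank stability from $K$ to $L$ via \eqref{eq1}, and invoke Theorem~\ref{Shlapentokh} together with the known unsolvability over $\Z[\zeta_3]$. The only difference is that you spell out the justification of the rank-doubling identity slightly more explicitly than the paper, which simply cites it in the preamble to the lemma.
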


\subsection{Data}

We now present the data for the following SAGE computations.
We fixed an elliptic curve $\EC: y^2 = x^3 + a_D$ and varied $1 < D \le 100$ over cube-free integers to check in which cases exactly one of the cubic twists had positive rank over $\Q$.
By the earlier discussion we know that Hilbert's $10^{\text{th}}$ problem is unsolvable in $\mathcal{O}_L$.

\begin{example}
Let $D$ be an integer in the set
\begin{equation*}
\begin{split}
\{3, 5, 7, 9, 10, 11, 13, 21, 23, 25, 26, 28, 29, 31, 33, 39, 41, 44, 45, 46, 47, 49, 51, 55, 59, 61, 65, 66, 67, \\
69, 75, 76, 77, 82, 87, 91, 93, 95, 97, 98, 99, 100\}.
\end{split}
\end{equation*}
Consider the elliptic curve 
\[
\textcolor{brown}{\EC : y^2 = x^3 + 1.}
\]
Then $\rk_{\Z}\EC(\Q) =0$ but using the SAGE code given below one can check that exactly one of $\EC_1/\Q$ or $\EC_2/\Q$ has rank 0 and the other has positive rank $r$.
We can conclude that the Hilbert's $10^{\text{th}}$ problem is unsolvable for the ring of integers of $\Q(\zeta_3, \sqrt[3]{D})$.

\begin{verbatim}
a = 1
for d in range(1,100): #sometimes the code stopped running; changed range manually
    E=EllipticCurve([0,a])
    E1=EllipticCurve([0,d^2*a])
    E2=EllipticCurve([0,d^4*a])
    print(d , E.rank() , E1.rank(), E2.rank())    
\end{verbatim}
\end{example}

Note: In the following examples, we only explicitly write the set of new $D$'s.

\begin{example}
Fix $D\in \{17, 35, 71\}$.
Consider the elliptic curve 
\[
\textcolor{OliveGreen}{\EC : y^2 = x^3 + 2.}
\]
Then a SAGE/MAGMA check shows that $\rk_{\Z}\EC(\Q) =1$ whereas $\rk_{\Z}\EC_1(\Q) = \rk_{\Z}\EC_2(\Q) = 0$.
\end{example}

\begin{example}
Fix $D$ to be an integer in the set
\[
\{12, 18, 20, 34, 42, 50, 53, 63, 83, 84, 94\}.
\]
Consider the elliptic curve \[\textcolor{Purple}{\EC : y^2 = x^3 + 3.}\]
Then a SAGE/MAGMA check shows that $\rk_{\Z}\EC(\Q) =1$ whereas $\rk_{\Z}\EC_1(\Q) = \rk_{\Z}\EC_2(\Q) = 0$.
\end{example}

\begin{example}
Let $D\in \{15, 38, 43, 57, 60, 62, 74, 75, 79, 85\}$ be an integer.
Consider the elliptic curve 
\[
\textcolor{blue}{\EC : y^2 = x^3 + 4.}
\]
Then $\rk_{\Z}\EC(\Q) =0$ and exactly one of $\EC_1/\Q$ or $\EC_2/\Q$  has rank 0 and the other has positive rank $r$.
\end{example}

\begin{example}
Fix $D\in \{ 5, 6, 30, 36 \}$.
Let 
\[
\textcolor{Orange}{\EC : y^2 = x^3 + 5.}
\]
Then a SAGE check shows that $\rk_{\Z}\EC(\Q) =1$ whereas $\rk_{\Z}\EC_1(\Q) = \rk_{\Z}\EC_2(\Q) = 0$.
\end{example}

\begin{example}
\label{Ex 6}
Let $D\in \{2, 4, 19, 52, 73, 86\}$ be an integer.
Consider the elliptic curve 
\[
\textcolor{purple}{\EC : y^2 = x^3 + 6.}
\]
Then $\rk_{\Z}\EC(\Q) =0$ while exactly one of $\EC_1/\Q$ or $\EC_2/\Q$ has rank 0 and the other has positive rank $r$.
\end{example}

\begin{example}
Let $D\in \{34, 78, 90\}$ be an integer.
Consider the elliptic curve 
\[
\textcolor{Brown}{\EC : y^2 = x^3 + 7.}
\]
Then $\rk_{\Z}\EC(\Q) =0$, exactly one of $\EC_1/\Q$ or $\EC_2/\Q$ has rank 0 and the other has $r>0$.
\end{example}

\begin{example}Let $D\in \{68, 70\}$ be an integer.
Consider the elliptic curve 
\[
\textcolor{MidnightBlue}{\EC : y^2 = x^3 + 13.}
\]
Then $\rk_{\Z}\EC(\Q) =0$ and exactly one of $\EC_1/\Q$ or $\EC_2/\Q$ has rank 0 and the other has positive rank $r$.
\end{example}

\begin{example}
Let $D=22$.
Consider the elliptic curve 
\[
\textcolor{red}{\EC : y^2 = x^3 + 14.}
\]
Then $\rk_{\Z}\EC(\Q) =0$ and exactly one of $\EC_1/\Q$ or $\EC_2/\Q$ has rank 0 and the other has rank $r>0$.
\end{example}

\begin{example}
Let $D=37$.
Consider the elliptic curve 
\[
\textcolor{Rhodamine}{\EC : y^2 = x^3 + 15.}
\]
Then $\rk_{\Z}\EC(\Q) =2$ and we check using SAGE that $\rk_{\Z}\EC_1(\Q) = \rk_{\Z}\EC_2(\Q)=0$.
\end{example}

\begin{example}
Let $D= 58$.
Consider the elliptic curve 
\[
\textcolor{Dandelion}{\EC : y^2 = x^3 + 23.}
\]
Then $\rk_{\Z}\EC(\Q) =0$ and exactly one of $\EC_1/\Q$ or $\EC_2/\Q$ has rank 0 and the other has rank $r>0$.
\end{example}

\begin{example}
Let $D\in \{14, 92\}$ be an integer.
Consider the elliptic curve 
\[
\textcolor{Emerald}{\EC : y^2 = x^3 + 25.}
\]
Then $\rk_{\Z}\EC(\Q) =0$ and exactly one of $\EC_1/\Q$ or $\EC_2/\Q$ has rank 0 and the other has positive rank $r$.
\end{example}

\begin{example}
Let $D=89$.
Consider the elliptic curve 
\[
\textcolor{YellowGreen}{\EC : y^2 = x^3 + 89.}
\]
Then $\rk_{\Z}\EC(\Q) = 2$ and we check that $\rk_{\Z}\EC_1(\Q) = \rk_{\Z}\EC_2(\Q) =0$.
\end{example}

\begin{table}[]
\caption{Values of cube-free $1\le D \le100$ for which unsolvability of Hilbert's 10 Problem is obtained using the method outlined above:}
\begin{center}
\begin{tabular}{ c c c c c c c c c c}
 \textbf{1} & \textcolor{purple}{2} & \textcolor{brown}{3} & \textcolor{purple}{4} & \textcolor{brown}{5} & \textcolor{Orange}{6} & \textcolor{brown}{7} & \textbf{8} & \textcolor{brown}{9} & \textcolor{brown}{10} \\ 
 \textcolor{brown}{11} & \textcolor{Purple}{12} & \textcolor{brown}{13} & \textcolor{Emerald}{14} & \textcolor{blue}{15} & \textbf{16} & \textcolor{OliveGreen}{17} & \textcolor{Purple}{18} & \textcolor{purple}{19} & \textcolor{Purple}{20} \\
 \textcolor{brown}{21} & \textcolor{red}{22} & \textcolor{brown}{23} & \textbf{24} & \textcolor{brown}{25} & \textcolor{brown}{26} & \textbf{27} & \textcolor{brown}{28} & \textcolor{brown}{29} & \textcolor{Orange}{30} \\
 \textcolor{brown}{31} & \textbf{32} & \textcolor{brown}{33} & \textcolor{Brown}{34} & \textcolor{OliveGreen}{35} & \textcolor{Orange}{36} & \textcolor{Rhodamine}{37} & \textcolor{blue}{38} & \textcolor{Purple}{39} & \textbf{40} \\
 \textcolor{brown}{41} & \textcolor{Purple}{42} & \textcolor{blue}{43} & \textcolor{brown}{44} & \textcolor{brown}{45} & \textcolor{brown}{46} & \textcolor{brown}{47} & \textbf{48} & \textcolor{brown}{49} & \textcolor{Purple}{50} \\
 \textcolor{brown}{51} & \textcolor{purple}{52} & \textcolor{Purple}{53} & \textbf{54} & \textcolor{brown}{55} & \textbf{56} & \textcolor{blue}{57} & \textcolor{Dandelion}{58} & \textcolor{brown}{59} & \textcolor{blue}{60} \\
 \textcolor{brown}{61} & \textcolor{blue}{62} & \textcolor{Purple}{63} & \textbf{64} & \textcolor{brown}{65} & \textcolor{brown}{66} & \textcolor{brown}{67} & \textcolor{MidnightBlue}{68} & \textcolor{brown}{69} & \textcolor{MidnightBlue}{70} \\
 \textcolor{OliveGreen}{71} & \textbf{72} & \textcolor{purple}{73} & \textcolor{blue}{74} & \textcolor{brown}{75} & \textcolor{brown}{76} & \textcolor{brown}{77} & \textcolor{Brown}{78} & \textcolor{blue}{79} & \textbf{80} \\ 
 \textbf{81} & \textcolor{brown}{82} & \textcolor{Purple}{83} & \textcolor{Purple}{84} & \textcolor{blue}{85} & \textcolor{purple}{86} & \textcolor{brown}{87} & \textbf{88} & \textcolor{YellowGreen}{89} & \textcolor{Brown}{90} \\ 
 \textcolor{brown}{91} & \textcolor{Emerald}{92} & \textcolor{brown}{93} & \textcolor{Purple}{94} & \textcolor{brown}{95} & \textbf{96} & \textcolor{brown}{97} & \textcolor{brown}{98} & \textcolor{brown}{99} & \textcolor{brown}{100} \\ 
 \end{tabular}
\end{center}
\end{table}


\begin{remark}
    We thank the referee for pointing out to us that using MAGMA one can check that the pairs $(a,D) = (2,71)$ and $(a,D) = (3,83)$ have the desired property.
\end{remark}

\subsection{Follow-up Questions}
\label{Sec: Follow-up questions}
These observations raise the following two obvious questions:

\medskip

\textbf{Question 1:}
For every cube-free integer $D$ does there always exist an elliptic curve of the type
$\EC: y^2 = x^3 + a$
such that on twisting by $D$ exactly one of $\EC, \ \EC_1, \ \EC_2 $ has positive rank (over $\Q$) and the other two have rank 0 (over $\Q$)?

\medskip

Even though this question has implications for Hilbert's $10^{\text{th}}$ Problem, in particular it proves that  Hilbert's $10^{\text{th}}$ Problem is unsolvable in the ring of integers of $\Q(\zeta_3, \sqrt[3]{D})$ for every (cube-free integer) $D$, the question is interesting in its own right.

\medskip

\textbf{Question 2:}
Fix an elliptic curve $\EC: y^2 = x^3 + a$.
For what proportion of cube-free integers $D$, does exactly one of $\EC, \ \EC_1, \ \EC_2$ have positive rank $r$ (over $\Q$) and the other two have rank 0 (over $\Q$)?

\section{Main Result and Proof}
\label{Sec: main result}

\subsection{Hilbert's \texorpdfstring{$10^{\text{th}}$}{} Problem for \texorpdfstring{$\Q(\zeta_3, \sqrt[3]{p})$}{}}

We begin this section by proving some lemmas which will be required for the main theorem.

\begin{lemma}
\label{Lemma 4.1}
Let $p$ be a prime such that $p \equiv 2, 4, 5, 7 \pmod{9}$.
Then Hilbert's $10^{\text{th}}$ problem is unsolvable for the ring of integers of $\Q(\zeta_3, \sqrt[3]{p})$.
\end{lemma}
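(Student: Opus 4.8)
The plan is to reduce the claim to the existence, for each congruence class of $p$, of a cube-free twisting parameter $D = D(p)$ and a Mordell curve $\EC_a$ for which exactly one of $\EC_a, \EC_{aD^2}, \EC_{aD^4}$ has positive rank over $\Q$, and then invoke Lemma~\ref{lem1}. The natural choice here is to take $D = p$ (or a small multiple of $p$) and to work with the curves $\EC_{-432}$, $\EC_{-432p^2}$, $\EC_{-432p^4}$, since these are precisely the curves governing whether $p$ and $p^2$ are cube-sums. Concretely, set $a = -432$, so that $\EC_1 = \EC_{-432p^2}$ and $\EC_2 = \EC_{-432p^4}$ in the notation of Lemma~\ref{lem1} (with $L = \Q(\zeta_3,\sqrt[3]{p})$, which is legitimate since $p$ is cube-free).

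\textbf{The two cases.} First suppose $p \equiv 2, 5 \pmod 9$. By Theorem~\ref{Thm: JMS}(a), both $p$ and $p^2$ fail to be cube-sums, so $\rk_\Z \EC_{-432p^2}(\Q) = \rk_\Z \EC_{-432p^4}(\Q) = 0$. It then remains to produce an $a$ with $\rk_\Z \EC_a(\Q) > 0$ whose associated twists by $p$ are these two rank-zero curves; but the base curve itself in the triple $(\EC_a, \EC_1, \EC_2)$ with $a = -432$ is $\EC_{-432}$, which corresponds to $D=1$ and has rank $0$ — so this direct choice makes \emph{zero} of the three have positive rank, not one. Hence in this case I instead take the base twist to be one of the higher ones: apply Lemma~\ref{lem1} to the curve $\EC_{-432p^2}$ itself viewed with twisting parameter $D = p$, whose cubic twists are $\EC_{-432p^4}$ and $\EC_{-432p^6} = \EC_{-432 \cdot (p^2)^3} \cong \EC_{-432}$ over $\Q$ — no, the twists must be cube-free. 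The clean fix is that $p \equiv 2,5 \pmod 9$ forces also $9p$ and $9p^2$ to not be cube-sums (Theorem~\ref{Thm: JMS}(a)), and one seeks a companion prime or small integer $a$ with $a \equiv 8 \pmod 9$ so that $ap$ (or $ap^2$) is a cube-sum while $p, p^2$ are not; Theorem~\ref{Thm: JMS}(c) guarantees infinitely many such $a$, giving a triple in which exactly one member has positive rank. Second, suppose $p \equiv 4, 7 \pmod 9$. Split further: if $3 \notin \mathbb{F}_p^3$, then by Theorem~\ref{Thm: JMS}(b)(i) $p$ itself is a cube-sum, so $\rk_\Z \EC_{-432p^2}(\Q) > 0$; one then needs the other two twists in a suitable triple to have rank $0$, which is supplied by choosing an auxiliary prime $\ell \equiv 8 \pmod 9$ with $\ell \notin \mathbb{F}_p^3$ so that Theorem~\ref{Thm: JMS}(b)(ii) kills the ranks of $\EC_{-432(\ell p)^2}$, etc. If instead $3 \in \mathbb{F}_p^3$, one argues symmetrically using part (b)(ii) together with a cube-sum-producing input from part (c).

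\textbf{Main obstacle.} The delicate point is bookkeeping the cube-free twisting parameter so that the three curves $\EC_a, \EC_{aD^2}, \EC_{aD^4}$ are genuine cubic twists of one another (so that \eqref{eq1} applies) while simultaneously each of the three is one of the Mordell curves $\EC_{-432 M^2}$ whose rank is controlled by Theorem~\ref{Thm: JMS}; this forces $a$ and $D$ to be chosen so that $a$, $aD^2$, $aD^4$ are all of the shape $-432 M^2$ with $M$ in a congruence class covered by the cube-sum results, and pinning down exactly one of them to have positive rank. The cases $p \equiv 4,7$ additionally require invoking the Chebotarev-type input (Theorem~\ref{Thm: JMS}(b)(i) or (c)) to find a suitable auxiliary prime $\ell$ coprime to $p$ with the prescribed cubic-residue condition, and checking that the resulting product lies in the right residue class mod $9$. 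Once the triple is exhibited in each case, Lemma~\ref{lem1} closes the argument immediately.
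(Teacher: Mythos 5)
Your overall strategy --- exhibit, for each $p$, a Mordell curve whose cubic twists by $D=p$ form a triple with exactly one member of positive rank, then invoke Lemma~\ref{lem1} --- is the paper's strategy, but the execution has genuine gaps in both cases. For $p\equiv 2,5\pmod 9$ you correctly see that $a=-432$ fails (all three of $\EC_{-432},\EC_{-432p^2},\EC_{-432p^4}$ have rank $0$), but your proposed fix misuses Theorem~\ref{Thm: JMS}(c): that result produces primes $\ell\equiv a\pmod{9d}$ such that $\ell$ \emph{itself} is a cube-sum; it says nothing about a product $ap$ or $ap^2$ being a cube-sum, which is what you ask of it. The missing observation is elementary: $9=1^3+2^3$ is a cube-sum, so $\rk_{\Z}\EC_{-432\cdot 9^2}(\Q)=1$, while Theorem~\ref{Thm: JMS}(a) says $9p$ and $9p^2$ are \emph{not} cube-sums. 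Hence the triple with $a=-432\cdot 9^2$ and $D=p$, namely $\EC_{-432\cdot 9^2}$, $\EC_{-432(9p)^2}$, $\EC_{-432(9p^2)^2}$, has exactly one positive-rank member and Lemma~\ref{lem1} applies directly to $L=\Q(\zeta_3,\sqrt[3]{p})$. You quote the statement about $9p,9p^2$ but never supply the positive-rank member of the triple.

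For $p\equiv 4,7\pmod 9$ your first subcase ($3\notin\mathbb{F}_p^3$, so $p$ is a cube-sum) does not close. If you base the triple at $\EC_{-432p^2}$ and bring in the auxiliary prime $\ell$, the three curves correspond to $p,\ \ell p,\ p\ell^2$, so the twisting parameter is $D=\ell$ and Lemma~\ref{lem1} gives unsolvability over $\Q(\zeta_3,\sqrt[3]{\ell})$ --- the wrong field. If instead you twist $\EC_{-432p^2}$ by $D=p$, the triple is $\EC_{-432},\EC_{-432p^2},\EC_{-432p^4}$ and nothing you cite controls whether $p^2$ is also a cube-sum; if it is, two members have positive rank and the lemma fails. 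The case split on whether $3\in\mathbb{F}_p^3$ is unnecessary. The paper's argument is: choose any non-cube $B$ modulo $p$, use CRT to find $A\equiv 8\pmod 9$ with $A\equiv B\pmod p$, and apply Theorem~\ref{Thm: JMS}(c) with modulus $9p$ to produce a \emph{cube-sum prime} $\ell\equiv A\pmod{9p}$; then $\ell\equiv 8\pmod 9$ and $\ell\notin\mathbb{F}_p^3$, so Theorem~\ref{Thm: JMS}(b)(ii) forces $\ell p$ and $\ell p^2$ to not be cube-sums, and the triple $\EC_{-432\ell^2},\EC_{-432(\ell p)^2},\EC_{-432(\ell p^2)^2}$ with $D=p$ has exactly one positive-rank member. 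You allude to a ``cube-sum-producing input from part (c)'' and to residue-class bookkeeping, but the CRT step making $\ell$ simultaneously a cube-sum and a non-cube modulo $p$ is the substance of this case and is absent from your write-up.
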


\begin{proof}
In each of these cases, we prove the unsolvability of Hilbert's $10^{\text{th}}$ Problem in $\Q(\zeta_3, \sqrt[3]{p})$ by applying Lemma \ref{lem1} on an appropriately chosen elliptic curve $E_a$. 
\begin{enumerate}
\item[\textup{(}a\textup{)}]
\textbf{When $p \equiv 2, 5 \pmod{9}$.}
The elliptic curve $\EC_{-432*9^2}$ has rank $1$ over $\Q$. 
As noted in Theorem~\ref{Thm: JMS}(a), both $9p$ and $9p^2$ are not cube-sums i.e.  both the elliptic curves $\EC_{-432*9^2*p^2}$ and $\EC_{-432*9^2*p^4}$ have rank $0$ over $\Q$.
Now the result follows from Lemma \ref{lem1}. 

\item[\textup{(}b\textup{)}]
\textbf{When $p \equiv 4,7 \pmod{9}$.}
Fix an integer $B$  such that $B \notin \mathbb{F}_p^3$.
By Chinese Remainder Theorem, there exists an integer $A$ such that $A \equiv 8 \pmod 9$ and $A \equiv B \pmod p$.
Now by Theorem~\ref{Thm: JMS}(c), there exists a prime $\ell$ such that $ \ell \equiv A \pmod{9p}$ which is a cube-sum and hence $\rk_{\Z} \EC_{-432\ell^2}(\Q) >0$. On the other hand, as mentioned in Theorem~\ref{Thm: JMS}(b)(ii), both $\ell p$ and $\ell p^2$ are not cube-sums and the result follows from Lemma \ref{lem1}. \qedhere

\end{enumerate}
\end{proof}


\begin{lemma}
\label{Lemma 4.2}
For $100\%$ of the primes $p$ (with respect to the natural density) of the form $p \equiv 8 \pmod{9}$, Hilbert's $10^{\text{th}}$ problem is unsolvable for the ring of integers of $\Q(\zeta_3, \sqrt[3]{p})$.
\end{lemma}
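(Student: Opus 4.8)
The plan is to deduce the statement from Lemma~\ref{lem1} by twisting a Mordell curve of cube-sum type, namely $\EC_a$ with $a=-432q^2$ for a well-chosen auxiliary prime $q$, by $D=p$ itself; this turns the hypothesis of Lemma~\ref{lem1} into a question about which of the three integers $q$, $qp$, $qp^2$ is a cube-sum. Note first that the device used in Lemma~\ref{Lemma 4.1}(b) --- choosing an auxiliary prime that is a cubic non-residue modulo $p$ --- is unavailable here, because $p\equiv 8\pmod 9$ forces $p\equiv 2\pmod 3$, so \emph{every} residue class is a cube modulo $p$. The fix is to interchange the roles: fix $q$ in the class $4\pmod 9$ (which is $\equiv 1\pmod 3$, hence has cubic non-residues) and instead require $p$ to be a cubic non-residue modulo $q$.

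So, given a prime $p\equiv 8\pmod 9$, I would invoke the Chebotarev density theorem to produce a prime $q$ with $q\equiv 4\pmod 9$, $3\notin\mathbb{F}_q^3$ and $p\notin\mathbb{F}_q^3$; such $q$ exist with positive density because the three cyclic cubic extensions $\Q(\zeta_9)$, $\Q(\zeta_3,\sqrt[3]{3})$, $\Q(\zeta_3,\sqrt[3]{p})$ of $\Q(\zeta_3)$ are independent, i.e.\ their compositum has degree $27$ over $\Q(\zeta_3)$ --- for a prime $p\neq 3$ this is a routine ramification check, since $\Q(\zeta_3,\sqrt[3]{p})/\Q(\zeta_3)$ is ramified at a prime above $p$ whereas $\Q(\zeta_9,\sqrt[3]{3})/\Q(\zeta_3)$ is not. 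With such a $q$ in hand, Theorem~\ref{Thm: JMS}(b)(i) applied to $q$ shows that $q$ is a cube-sum, so $\rk_{\Z}\EC_{-432q^2}(\Q)>0$; and Theorem~\ref{Thm: JMS}(b)(ii), applied with the roles of its ``$p$'' and ``$\ell$'' played by $q$ and our $p$ respectively --- so that the hypothesis ``$\ell\notin\mathbb{F}_p^3$'' there becomes ``$p\notin\mathbb{F}_q^3$'' --- shows that $qp$, $qp^2$ and $q^2p$ are not cube-sums, i.e.\ $\rk_{\Z}\EC_{-432(qp)^2}(\Q)=\rk_{\Z}\EC_{-432(qp^2)^2}(\Q)=0$.

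Now set $a=-432q^2$ and $D=p$, a cube-free integer. In the notation of \eqref{eq1}, $\EC=\EC_a=\EC_{-432q^2}$, $\EC_1=\EC_{aD^2}=\EC_{-432(qp)^2}$ and $\EC_2=\EC_{aD^4}=\EC_{-432(qp^2)^2}$. Since $q$, $qp$, $qp^2$ are cube-free and exceed $2$, all three curves are torsion-free, so being a cube-sum is equivalent to having positive Mordell--Weil rank over $\Q$; hence, by the previous paragraph, exactly one of $\EC,\EC_1,\EC_2$ --- namely $\EC$ --- has positive rank over $\Q$. Lemma~\ref{lem1} then gives that Hilbert's $10^{\text{th}}$ problem is unsolvable over the ring of integers of $\Q(\zeta_3,\sqrt[3]{D})=\Q(\zeta_3,\sqrt[3]{p})$. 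As $p\equiv 8\pmod 9$ was arbitrary, the argument handles every such prime, hence a fortiori $100\%$ of them; alternatively, if one prefers to keep the auxiliary prime independent of $p$, one may fix a finite list $q_1,\dots,q_n\equiv 4\pmod 9$ with $3\notin\mathbb{F}_{q_i}^3$, in which case the primes $p\equiv 8\pmod 9$ not handled by some $q_i$ form a set of density $3^{-n}\to 0$.

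The step I expect to demand the most care is the Chebotarev input of the second paragraph: confirming that one can simultaneously prescribe $q\bmod 9$ (so that $q\equiv 1\pmod 3$ and the hypotheses of Theorem~\ref{Thm: JMS}(b) are met, with $\mathbb{F}_q^3$ of index $3$) together with the two cubic non-residue conditions --- which is precisely the degree assertion $[\Q(\zeta_9,\sqrt[3]{3},\sqrt[3]{p}):\Q(\zeta_3)]=27$ --- and then tracking the cubic twists $\EC_{aD^{2i}}$ to exactly the cube-sum integers named in Theorem~\ref{Thm: JMS}(b)(ii).
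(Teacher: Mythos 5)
Your proof is correct, and the core mechanism is the same as the paper's: apply Lemma~\ref{lem1} to the Mordell curve $\EC_{-432\ell^2}$ attached to an auxiliary cube-sum prime $\ell\equiv 4,7\pmod 9$ with $3\notin\mathbb{F}_\ell^3$, twisted by $D=p$, using Theorem~\ref{Thm: JMS}(b)(i) for the positive-rank curve and (b)(ii) (with the roles of $p$ and $\ell$ as you describe) for the two rank-zero twists. Where you genuinely diverge is in how the auxiliary prime is produced. The paper fixes the sequence $\ell_1=7,\ell_2=13,\ell_3=31,\dots$ of all such primes in advance and runs an inclusion--exclusion on congruence conditions: the primes $p\equiv 8\pmod 9$ with $p\in\mathbb{F}_{\ell_i}^3$ for $i\le k$ and $p\notin\mathbb{F}_{\ell_{k+1}}^3$ are handled at step $k$, giving density $1-3^{-(k+1)}\to 1$ --- this is exactly the ``fallback'' you sketch in your last sentence, and it needs only Dirichlet's theorem on arithmetic progressions. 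You instead choose $q$ \emph{adapted to} $p$ via Chebotarev applied to the degree-$27$ compositum $\Q(\zeta_9,\sqrt[3]{3},\sqrt[3]{p})/\Q(\zeta_3)$; the linear-disjointness check you outline is sound (the ramification-at-$p$ argument separates $\Q(\zeta_3,\sqrt[3]{p})$ from $\Q(\zeta_9,\sqrt[3]{3})$, and $\Q(\zeta_9)\ne\Q(\zeta_3,\sqrt[3]{3})$ since only the former is abelian over $\Q$). This buys you a strictly stronger conclusion --- unsolvability for \emph{every} prime $p\equiv 8\pmod 9$, not merely a density-one set --- at the cost of a Chebotarev/Kummer-theory input that the paper's purely congruence-theoretic iteration avoids.
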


\begin{proof}
Consider the infinite set consisting of primes
$S= \{ \ell \mid \ell \equiv 4, 7 \pmod 9\textrm{ and } 3 \notin \mathbb{F}^3_\ell\} $. Enumerate the primes of $S$ as $\ell_1 =7$, $\ell_2 = 13$, $\ell_3=31, \ldots$ Further note that for every $\ell\in S$, the prime $\ell$ is a cube-sum by Theorem~\ref{Thm: JMS}(b)(i). 

We have $\ell_1=7$, now take a prime $p \equiv 8 \pmod{9}$ such that $p \not\equiv \pm 1 \pmod{\ell_1}$, i.e., choose a prime $p \equiv 8 \pmod{9}$ such that $p \notin \mathbb{F}^3_{\ell_1}$.
By Theorem~\ref{Thm: JMS}(b)(ii), both $\ell_1 p$ and $\ell_1 p^2$ are not cube-sum.
By Lemma \ref{lem1}, Hilbert's $10^{\text{th}}$ problem is unsolvable for the ring of integers of $L = \Q(\zeta_3, \sqrt[3]{p})$.
Thus among  all the primes $p$ of the form $p\equiv 8\pmod{9}$, for two-thirds of them (corresponds to those which are not of the form $\pm 1 \pmod 7$) Hilbert's $10^{\text{th}}$ problem is unsolvable for $\mathcal{O}_L$.

Next we work with the prime $\ell_2 =13$. 
We choose a prime $p$ such that $p \equiv 8 \pmod9$, $p \equiv \pm 1 \pmod 7$, and $p \not\equiv \pm 1, \pm 8 \pmod{13}$, that is $p \equiv 8 \pmod 9$, $p \in \mathbb{F}^3_{\ell_1}$ and $p \notin \mathbb{F}^3_{\ell_2}$.
Again by Theorem~\ref{Thm: JMS}(b)(ii) both $\ell_2 p$ and $\ell_2 p^2$ are not cube-sum and hence, we can conclude by Lemma \ref{lem1} Hilbert's $10^{\text{th}}$ problem is unsolvable for $\mathcal{O}_L$.
Consequently, we can now conclude that among all the of primes $p$ of the form $p \equiv 8 \pmod{9}$ and $p \equiv \pm 1 \pmod 7$, for $2/3$-rd of them Hilbert's $10^{\text{th}}$ problem is unsolvable for $\mathcal{O}_L$. 

Combining the two cases,  for $\frac{2}{3} + \left(\frac{1}{3} \times \frac{2}{3} \right) = \frac{8}{9}$-th of the  primes $p$ of the form $p \equiv 8 \pmod{9}$, Hilbert's $10^{\text{th}}$ problem is unsolvable for $\mathcal{O}_L$.

Proceeding in this way, we see that at the $k$-step, for a prime $p$ with $p \equiv 8 \pmod 9$, $p \in \mathbb{F}^3_{\ell_i}$ for $1 \le i \le k$ and $p \notin \mathbb{F}^3_{\ell_{k+1}}$, Hilbert's $10^{\text{th}}$ problem is unsolvable for $\mathcal{O}_L$.
Consequently, we conclude that for $(1- \frac{1}{3^{k+1}})$ of primes $p$ of the form $p \equiv 8 \pmod{9}$, Hilbert's $10^{\text{th}}$ problem is unsolvable for $\mathcal{O}_L$.
The set $S$ is infinite; by induction it follows that for $100\%$ of primes $p$ of the form $p \equiv 8 \pmod{9}$, Hilbert's $10^{\text{th}}$ problem is unsolvable for $\mathcal{O}_L$. 
\end{proof}

\begin{theorem}
For $\frac{5}{6}$-th (with respect to the natural density) of all primes $p$, Hilbert's $10^{\text{th}}$ problem is unsolvable for the ring of integers of $\Q(\zeta_3, \sqrt[3]{p})$.
\end{theorem}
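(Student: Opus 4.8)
The plan is to combine Lemma~\ref{Lemma 4.1} and Lemma~\ref{Lemma 4.2}, partitioning the primes according to their residue class modulo $9$. The group $(\Z/9\Z)^\times$ has order $6$, with representatives $1, 2, 4, 5, 7, 8$; by Dirichlet's theorem on primes in arithmetic progressions, each of these six classes contains a set of primes of natural density $\tfrac{1}{6}$ inside the set of all primes, while the single prime $p = 3$ contributes density zero and may be ignored.

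First I would invoke Lemma~\ref{Lemma 4.1}: for every prime $p \equiv 2, 4, 5, 7 \pmod 9$ --- that is, in four of the six admissible classes --- Hilbert's $10^{\text{th}}$ problem is unsolvable for $\mathcal{O}_{\Q(\zeta_3, \sqrt[3]{p})}$. This already accounts for a set of primes of natural density $\tfrac{4}{6} = \tfrac{2}{3}$. Next I would bring in Lemma~\ref{Lemma 4.2}, which treats the remaining relevant class $p \equiv 8 \pmod 9$: within this class (itself of density $\tfrac{1}{6}$ among all primes), the subset of primes for which Hilbert's $10^{\text{th}}$ problem is unsolvable for $\mathcal{O}_{\Q(\zeta_3, \sqrt[3]{p})}$ has relative density $1$, hence natural density $\tfrac{1}{6}$ among all primes.

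Adding the two contributions --- which arise from disjoint residue classes modulo $9$ --- yields a set of primes of natural density $\tfrac{2}{3} + \tfrac{1}{6} = \tfrac{5}{6}$ for which the desired conclusion holds, proving the theorem. The only residue class left untreated is $p \equiv 1 \pmod 9$, of density $\tfrac{1}{6}$, which is precisely the gap between $\tfrac{5}{6}$ and $1$.

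Since every step is a direct appeal to an already-established lemma together with elementary bookkeeping of natural densities over disjoint arithmetic progressions, there is no real obstacle here; the only point requiring (minor) care is to observe that a relative-density-$1$ statement inside a density-$\tfrac{1}{6}$ class of primes, as furnished by Lemma~\ref{Lemma 4.2}, multiplies down to a density-$\tfrac{1}{6}$ statement among all primes, so that the two pieces genuinely sum to $\tfrac{5}{6}$ rather than overlapping or falling short.
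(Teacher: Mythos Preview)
Your argument is correct and is exactly the approach of the paper, which simply states that the result follows immediately from Lemmas~\ref{Lemma 4.1} and~\ref{Lemma 4.2}. You have merely spelled out the density bookkeeping (Dirichlet's theorem, the partition into residue classes modulo $9$, and the multiplication of the relative density from Lemma~\ref{Lemma 4.2} by $\tfrac{1}{6}$) that the paper leaves implicit.
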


\begin{proof}
The result follows immediately from Lemmas~\ref{Lemma 4.1} and \ref{Lemma 4.2}.
\end{proof}

\subsection{Hilbert's \texorpdfstring{$10^{\text{th}}$}{} Problem for \texorpdfstring{$\Q(\zeta_3, \sqrt{D}, \sqrt[3]{p})$}{}}

Once we have established that Hilbert's $10^{\text{th}}$ Problem is unsolvable in the degree 6 extension $\Q(\zeta_3, \sqrt[3]{p})$, we can extend it to certain degree 12 extensions by using the corollary of Shlapentokh's theorem.

These are new examples of rings of integers of number fields where Hilbert's $10^{\text{th}}$ problem is being shown to be unsolvable.
For the argument in \cite{kundu2024studying}, the authors found $\EC/\Q$ with rank 0 over $\Q(\zeta_3, \sqrt[3]{p})$ and positive rank over $\Q(\sqrt{D})$; this \textit{did not} provide examples of degree 12 number fields $L_{D,p} = \Q(\zeta_3, \sqrt{D}, \sqrt[3]{p})$ where Hilbert's $10^{\text{th}}$ problem is unsolvable for $\mathcal{O}_{L_{D,p}}$ because the unsolvability of Hilbert's $10^{\text{th}}$ problem for the ring of integers of $\Q(\zeta_3, \sqrt[3]{p})$ was not verified. 

\begin{proposition}\label{Lemma 4.4}
Let $S$ be the set of square-free integers such that for every $D \in S$, the rank of the elliptic curve $\EC_{-432D^3}$ is positive and $T$ be the set of primes $p$ of the form $p \equiv 2,5 \pmod{9}$. 
Hilbert's $10^\text{th}$ problem is unsolvable for the ring of integers of $L_{D,p}$ for every $D \in S$ and every $p \in T$.

Further, the set $S$ is infinite and moreover for $X \gg 0$, we have 
\[
\#(S \cap [-X,X]) \gg X^{1 - \epsilon} \textrm{ for any  } \epsilon >0.
\]
\end{proposition}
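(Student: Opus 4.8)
The plan is to combine the quadratic-extension criterion (Theorem~\ref{GFP prop 3.3}) with the already-established unsolvability over $\Q(\zeta_3,\sqrt[3]{p})$ (the main theorem of the previous subsection, applied to primes $p\equiv 2,5\pmod 9$). Fix such a prime $p$ and a square-free $D\in S$. Take $F=\Q(\zeta_3,\sqrt[3]{p})$, $K=\Q(\sqrt D)$, so that the compositum is $L=L_{D,p}=\Q(\zeta_3,\sqrt D,\sqrt[3]{p})$. As the elliptic curve $\EC/\Q$ in Theorem~\ref{GFP prop 3.3} I would choose the Mordell curve $\EC:=\EC_{-432D^2}:y^2=x^3-432D^2$. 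Then I need to verify the two hypotheses: (i) $\rk_\Z\EC(F)=0$, and (ii) $\rk_\Z\EC(K)>0$. Given these, Theorem~\ref{GFP prop 3.3} yields that $\Z$ is Diophantine in $\mathcal O_L$, because the previous subsection guarantees $\Z$ is Diophantine in $\mathcal O_F=\mathcal O_{\Q(\zeta_3,\sqrt[3]{p})}$ for $p\equiv 2,5\pmod 9$.

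For hypothesis (ii): $\EC_{-432D^2}$ is the Weierstrass model of $X^3+Y^3=DZ^3$, and $\EC_{-432D^2}$ is the quadratic twist by $D$ of $\EC_{-432D^3}$ wait — more precisely the curve $y^2=x^3-432D^2$ becomes, after the substitution $x\mapsto Dx,\ y\mapsto D\sqrt D\,y$ over $\Q(\sqrt D)$, isomorphic to $y^2=x^3-432$, whose Mordell--Weil rank over $\Q$ is $0$; so that is not directly what I want. Instead observe that over $K=\Q(\sqrt D)$ the curve $\EC_{-432D^2}$ is isomorphic to its quadratic twist $\EC^{(D)}_{-432D^2}$, and that quadratic twist is $\Q$-isomorphic to $\EC_{-432D^2\cdot D^3}=\EC_{-432D^5}$, which in turn (since $D^5$ and $D^{-1}$ differ by the cube $D^6$, and replacing $D$ by a cube multiple of it does not change the curve up to $\Q$-isomorphism when we pass to the associated cube-sum curve) is $\Q$-isomorphic to $\EC_{-432D^{-1}\cdot(\text{cube})}$; cleaner: the quadratic twist by $D$ of $\EC_{-432D^2}$ is $\Q$-isomorphic to $\EC_{-432D^2\cdot D^3}$, and $\EC_{-432D^5}\cong_\Q\EC_{-432D^2}$ only if $D^3$ is a cube, which it is. Let me instead just use directly: the quadratic twist by $D$ of $\EC_{-432D^2}$ is the curve $-Dy^2=x^3-432D^2$, i.e.\ $y^2=x^3-432D^{-1}\cdot(\text{rescale})$, which is $\Q$-isomorphic to $\EC_{-432D^3}$. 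Hence $\rk_\Z\EC_{-432D^2}(K)=\rk_\Z\EC_{-432D^2}(\Q)+\rk_\Z\EC_{-432D^3}(\Q)\ge \rk_\Z\EC_{-432D^3}(\Q)>0$ by the defining property of $S$. This is the step I would write out carefully, since getting the twist bookkeeping exactly right (which curve is the $D$-quadratic-twist of which) is where an error could creep in. For hypothesis (i): $\rk_\Z\EC_{-432D^2}(F)=2\rk_\Z\EC_{-432D^2}(\Q)+\rk_\Z\EC_{-432D^2\cdot p^2}(\Q)+\rk_\Z\EC_{-432D^2\cdot p^4}(\Q)$ by the cubic-twist rank formula~\eqref{eq1} applied with cube-free part $p$ of the base field $\Q(\sqrt[3]{p})$, composed with the degree-$3$ isogeny over $\Q(\zeta_3)$; so I need all three of $D,\ Dp^2\ (\text{up to cubes}),\ Dp^4\ (\text{up to cubes})$ to be non-cube-sums — this forces a restriction on $D$ modulo small primes and on the prime $p$, and this is where the construction of $S$ and the hypothesis $p\equiv 2,5\pmod 9$ must do real work; I would record precisely which congruence/Selmer conditions on $D$ guarantee $\rk_\Z\EC_{-432D^2}(\Q)=\rk_\Z\EC_{-432(Dp^2)^2/\gcd}(\Q)=\cdots=0$, invoking the relevant non-cube-sum results (e.g.\ along the lines of Theorem~\ref{Thm: JMS}) for the congruence classes that arise.

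For the density statement, the plan is to produce $S$ from a known lower bound on the number of square-free $D\le X$ for which the Mordell--Weil rank of $\EC_{-432D^3}$ (equivalently, of the cube-sum curve for $D$ in a suitable congruence class) is positive, i.e.\ for which $D$ is a cube-sum. Here I would invoke the strongest available results on the count of cube-sum integers together with the auxiliary congruence conditions extracted in the previous paragraph: one wants infinitely many square-free $D$ in a fixed arithmetic progression (mod $9$ and mod whatever small primes the argument requires) that are cube-sums while simultaneously $Dp^2,Dp^4$ are not. Known quantitative results (such as those descending from the Heegner-point / analytic-rank constructions and the sieve arguments for cube-sums, cf.\ the references around \cite{DV,absbs,JMS2}) give $\gg X^{1-\epsilon}$ many such $D$ in the relevant range; I would cite the precise statement that yields $\#(S\cap[-X,X])\gg X^{1-\epsilon}$.

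The main obstacle is hypothesis (i): simultaneously arranging that none of the three cubic twists $\EC_{-432D^2},\EC_{-432D^2p^2},\EC_{-432D^2p^4}$ acquires positive rank over $\Q$ while $\EC_{-432D^3}$ does. This is a genuine constraint coupling $D$ and $p$, and making it compatible with a density-$X^{1-\epsilon}$ family of $D$ is the crux; everything else is the twist/isogeny rank bookkeeping and an appeal to the two cited theorems.
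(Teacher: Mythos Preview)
Your overall architecture (apply Theorem~\ref{GFP prop 3.3} with $F=\Q(\zeta_3,\sqrt[3]{p})$, $K=\Q(\sqrt D)$, and the base case for $F$ coming from Lemma~\ref{Lemma 4.1}) is exactly right, but the choice of auxiliary curve is wrong, and this is what creates the ``genuine constraint coupling $D$ and $p$'' that you identify as the crux. That coupling is an artifact of your choice, not of the problem.

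Take $\EC=\EC_{-432}$ rather than $\EC_{-432D^2}$. Then the quadratic twist of $\EC_{-432}$ by $D$ is $\EC_{-432D^3}$ on the nose, so
\[
\rk_\Z\EC_{-432}(\Q(\sqrt D))=\rk_\Z\EC_{-432}(\Q)+\rk_\Z\EC_{-432D^3}(\Q)>0
\]
directly from the definition of $S$; this is hypothesis~(ii). For hypothesis~(i), the cubic-twist formula~\eqref{eq1} gives
\[
\rk_\Z\EC_{-432}\bigl(\Q(\zeta_3,\sqrt[3]{p})\bigr)=2\bigl(\rk_\Z\EC_{-432}(\Q)+\rk_\Z\EC_{-432p^2}(\Q)+\rk_\Z\EC_{-432p^4}(\Q)\bigr)=0,
\]
since $\EC_{-432}$ has rank $0$ and, by Theorem~\ref{Thm: JMS}(a), neither $p$ nor $p^2$ is a cube-sum when $p\equiv 2,5\pmod 9$. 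No condition on $D$ enters here at all; $D$ and $p$ are completely decoupled.

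By contrast, with your curve $\EC_{-432D^2}$ the quadratic twist by $D$ is $\EC_{-432D^5}$, which is \emph{not} $\Q$-isomorphic (nor isogenous) to $\EC_{-432D^3}$ for square-free $|D|>1$: the ratio $D^5/D^3=D^2$ is not a sixth power. So your verification of~(ii) is incorrect as written, and your hypothesis~(i) would in addition force $D$ itself to be a non-cube-sum together with further conditions on $Dp,Dp^2$ that are neither needed nor easily arranged.

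For the density statement, the point is again that $\EC_{-432D^3}$ is the quadratic twist $\EC_{-432}^{(D)}$, so one is counting quadratic twists of a fixed elliptic curve with positive rank, not counting cube-sum integers. The paper invokes a result giving $\gg X^{1-\epsilon}$ square-free $|D|\le X$ with $\operatorname{ord}_{s=1}L(\EC_{-432}^{(D)},s)=1$, and then Gross--Zagier and Kolyvagin convert analytic rank~$1$ into algebraic rank~$1$. Your appeal to cube-sum counting results is aimed at the wrong family.
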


\begin{proof}
Consider the elliptic curve $\EC_{-432}$.
By our assumption on $S$, we know that 
\[
\rk_{\Z} \EC_{-432}(\Q(\sqrt{D})) = \rk_{\Z} \EC_{-432}(\Q) + \rk_{\Z} \EC_{-432D^3}(\Q) >0.
\]
Recall that for $p \in T$, by Theorem~\ref{Thm: JMS}(a), both $p$ and $p^2$ are not cube-sums, i.e. $\rk_{\Z} \EC_{-432p^2}(\Q)= \rk_{\Z} \EC_{-432p^4}(\Q)$. Further recall that $\rk_{\Z} \EC_{k}(\Q(\zeta_3)) = 2 \rk_{\Z} \EC_{k}(\Q)$.
Then it follows from \eqref{eq1} that 
\[
\rk_{\Z} \EC_{-432}(\Q(\zeta_3, \sqrt[3]{p})) = \rk_{\Z} \EC_{-432}(\Q(\zeta_3)) + \rk_{\Z} \EC_{-432p^2}(\Q(\zeta_3)) + \rk_{\Z} \EC_{-432p^4}(\Q(\zeta_3)) =0.
\]
In Lemma~\ref{Lemma 4.1}, we have shown that Hilbert's $10^\text{th}$ problem is unsolvable over $\Q(\zeta_3, \sqrt[3]{p})$.
Hence using Theorem~\ref{GFP prop 3.3}, we can guarantee that Hilbert's $10^\text{th}$ problem is unsolvable over $\mathcal{O}_{L_{D,p}}$.

To prove the second assertion, we argue as follows: let $\EC$ be an elliptic curve over $\Q$ and $D$ be a square-free integer co-prime to the conductor of $\EC$.
Let $\EC^{(D)}$ denote the quadratic twist of $\EC$ by $D$ and let $L(\EC,s)$ represent  the complex $L$-function of $\EC/\Q$.
Put 
\[
N_1(\EC,X):= \{ |D|<X \mid \operatorname{ord}_{s=1}\  L(\EC^{(D)}, s) = 1 \}.
\]
Then by \cite{PP}, for $X \gg 0$, we have $N_1(\EC,X) \gg_{\epsilon} X^{1 - \epsilon}$, for any $\epsilon >0$. 
By the work of B.~Gross--D.~Zagier and V.~Kolyvagin,  it is known  that for an elliptic curve $\mathsf{A}/\Q$,  $\rk_\Z \mathsf{A}(\Q) =1$ whenever $\operatorname{ord}_{s=1} \ L(\mathsf{A}, s) = 1 $.
By applying this argument to $\EC=\EC_{-432}$ and $\mathsf{A} = \EC^{(D)}$, the assertion follows.
\end{proof}


\begin{remark}
Note that for the square-free integers of the form (i) $ \pm (3k+2)$  with $k>0$ and (ii) $ \pm 3(3k+1)$ with $k>0$, the root number computations due to B.~Birch--N.~Stephens implies that the global root number of $\EC_{-432D^3}/\Q$, denoted by $\omega(\EC_{-432D^3})$ is $-1$.
Thus if we make an additional hypothesis that $\dim_{\mathbb{F}_3} \Sha(\EC_{-432D^3}/\Q)[3]$ is even (see \cite[\S 5]{jha20223}) for all square-free integers $D$, then the natural density of $S$ is at least $\frac{1}{2}$.
Note that for an elliptic curve $\EC$ over a number field $K$, the hypothesis $\dim_{\mathbb{F}_2} \Sha(\EC/K)[2]$ being even  is discussed  in \cite{mazur2010ranks}.
    
\end{remark}

\bibliographystyle{amsalpha}
\bibliography{references}

\end{document}